\documentclass[11pt]{article}
\usepackage[english]{babel}
\usepackage{epsfig}
\usepackage{amsfonts}
\usepackage{amssymb}
\usepackage{amsmath}
\usepackage{amsthm}
\usepackage{latexsym}
\usepackage{graphicx}
\usepackage{bm}
\usepackage{tabularx}
\usepackage{booktabs}
\usepackage{enumerate}
\usepackage{caption}
\usepackage{wrapfig}
\usepackage{geometry}
\usepackage{mathtools}
\usepackage[final]{microtype}
\usepackage{eqnarray}
\usepackage{enumitem}

\usepackage{graphicx}
\usepackage{subcaption}

\usepackage[title]{appendix}

\usepackage[numbers]{natbib}
\usepackage{comment}

\usepackage{bbm}
\usepackage{url}
\usepackage{hyperref}
\usepackage{color}
\usepackage{float}

\usepackage{hyperref}
\hypersetup{colorlinks=true, citecolor=blue, linkcolor=red, urlcolor=green}

\usepackage{bbm}
\usepackage{url}
\usepackage{color}
\usepackage{xcolor}
\usepackage[section]{placeins}

\graphicspath{{New_Graphs/}}

\theoremstyle{plain}
\newtheorem{theorem}{Theorem}[section]
\newtheorem{lemma}[theorem]{Lemma}
\newtheorem{corollary}[theorem]{Corollary}

\theoremstyle{definition}

\theoremstyle{remark}

\title{Erlang Loss Model with Energy Constrained Servers}
\author{ 
Elijah Kulikov  \\ Bronx High School of Science \\ {eakulikov9@gmail.com} 
\and
Eliezer Fuentes-Quezada  \\ School of Operations Research and Information Engineering \\ Cornell
University \\ {eif8@cornell.edu}
\and 
Jamol Pender \footnote{Corresponding Author}
\\
School of Operations Research and Information Engineering
\\
Cornell University
\\
{jjp274@cornell.edu}
}
\date{\today}
\usepackage{natbib}
\usepackage{graphicx}
\usepackage{amsmath}
\date{}

\begin{document}
\maketitle

\begin{abstract}
In this paper, we study an Erlang-type loss system with energy-constrained servers. Each server is equipped with a finite battery and becomes temporarily unavailable for service when its energy is depleted, entering a charging phase before returning to operation upon full recharge. Customers who arrive to find all servers unavailable are blocked and lost immediately.  We characterize the steady-state behavior of this two-dimensional Markov process by extending classical truncation results for Jackson networks to incorporate energy dynamics. This yields a closed-form product-form stationary distribution, from which we derive explicit expressions for the steady-state moments and the blocking probability.  Finally, we establish that the corresponding M/G/$k$/$k$ queue with stochastic charging is insensitive to both the service-time and charging-time distributions, depending only on their means.  Thus, we extend the insensitivity property of the Erlang loss queue to the stochastic server setting.
\end{abstract}

\section{Introduction}

Queues with stochastic server dynamics and with blocking are important models in queueing theory that capture systems where arriving customers or jobs may be denied service if the system is full. Unlike traditional queues with infinite waiting space, these models impose a strict capacity constraint, leading to the phenomenon of blocking. This makes the dynamics inherently more complex, as the system behavior depends not only on arrival and service rates but also on the interaction between the number of servers, the queue capacity, and the stochastic nature of customer arrivals and service times. These features make stochastic server queues with blocking particularly relevant in practical settings where resources are limited and overflows must be carefully managed.
The applications of blocking queues span a wide range of industries and technologies. In telecommunications, for instance, call centers and data packet networks often have limited channels or buffers, and blocking can directly impact quality of service. In healthcare, emergency departments or intensive care units face similar constraints, where the inability to admit a patient immediately may have severe consequences. Manufacturing and service operations also frequently encounter scenarios where machines or servers cannot handle additional jobs once full, requiring careful design and control to minimize lost throughput. Modeling these systems accurately is crucial to predicting performance, optimizing resource allocation, and designing robust operational policies.
A central focus in the analysis of stochastic server queues with blocking is the characterization of the steady-state distribution. Obtaining the stationary distribution allows practitioners and researchers to compute key performance metrics such as blocking probabilities, average occupancy, and system throughput. These quantities provide actionable insight into system efficiency, customer experience, and potential bottlenecks. From a theoretical perspective, the steady-state distribution also enables the validation of approximations, development of control policies, and derivation of performance bounds, which are all essential for both analysis and practical implementation.
In this paper, we focus on the rigorous computation and approximation of the steady-state distribution for a stochastic server queue with blocking. By developing methods to characterize the joint distribution of the number of busy servers and waiting customers, we provide a foundation for accurately quantifying blocking and congestion in finite-capacity systems. Our results not only contribute to the theoretical understanding of constrained stochastic service systems but also offer practical tools for engineers and decision-makers in fields ranging from telecommunications to healthcare and manufacturing. Ultimately, the ability to analyze and predict system behavior under blocking is critical for designing efficient, reliable, and high-performing service operations.

Recent work by \citet{fuentes2026queues} considers an Erlang-A queueing model and are inspired by drone delivery systems, where drones act as mobile servers that must periodically recharge between delivery tasks, see for example \citet{grippa2019drone, cokyasar2021designing, pinto2020network, huang2020drone, shavarani2019congested, seakhoa2019revenue, khamidehi2022dynamic, munishkin2023traffic, ghosh2023performance}. After completing a delivery (service), a drone cannot immediately take on a new request until it returns to a charging station and restores its energy. This creates a natural queueing framework where customer orders arrive randomly, and drones cycle between delivery and charging phases. Understanding the queue dynamics under such constraints allows system designers to balance the number of drones, charging docks, and expected wait times. Analytical and simulation-based models of queues with charging servers can help optimize fleet sizes, charging policies, and scheduling algorithms to minimize customer delay while maintaining energy efficiency.

In this paper, we consider the M/M/$k$/$k$ queue with charging servers because it provides a natural and analytically tractable framework for modeling systems with limited service capacity and no waiting space under energy constraints. In such systems, each server alternates between active service and a charging phase, and customers that arrive when all servers are occupied (either busy or charging) are blocked and lost. This captures the operational reality of many energy-dependent service environments such as fleets of autonomous machines, medical devices, and computational units where tasks cannot queue indefinitely, and service availability is restricted by both capacity and energy replenishment requirements. The M/M/$k$/$k$ structure allows us to incorporate stochastic arrivals, exponential service and charging times, and probabilistic transitions between service and charging states while maintaining analytical tractability through a continuous-time Markov chain formulation. Studying this model enables us to quantify key performance measures, such as the blocking probability and server utilization.  We show that the blocking probability can be approximated by an averaging of the Erlang-B formula with a binomial distribution. 

\subsection{Contributions of Paper}
In this paper, we make the following contributions:
\begin{itemize}
    \item Analyze a new queueing model with stochastic servers and blocking.
    \item Show the steady queue length and number of charging servers is given by a bivariate Poisson distribution truncated along a hyperplane.
    \item We finally conclude that the M/G/$k$/$k$ queueing with charging servers is insensitive to both the service-time and charging-time distributions.  
\end{itemize}

\subsection{Organization of Paper}
The remainder of the paper is organized as follows. In Section \ref{Sec_2}, we introduce the stochastic model for the M/M/$k$/$k$ queue with stochastic servers. We derive the associated functional forward equations and establish that the steady-state distribution is a bivariate Poisson distribution truncated along a hyperplane. In Section \ref{Sec_3}, we extend the analysis to the M/G/$k$/$k$ queue with stochastic servers and show, via a phase-type argument, that the system is insensitive to both the service-time and charging-time distributions. We conclude the paper in Section \ref{conclusion}.

\section{M/M/$k$/$k$ Queue with Server Charging} \label{Sec_2}

\subsection{Model Description}
In this section, we consider an M/M/$k$/$k$ queue (Erlang loss system) where servers may go to charge after completing service. In this queue, we have $k$ identical servers, each capable of serving one customer at a time or entering a charging state upon service completion. As a result, we define the two stochastic processes $Q(t)$ and $C(t)$ as
\[
Q(t) = \text{number of busy servers at time } t, \qquad 
\]
\[
C(t) = \text{number of servers currently charging at time } t.
\]
Since the number of servers is bounded, we have the following constraint on the stochastic processes $(Q(t),C(t))$
\[
Q(t) + C(t) \le k.
\]

We assume that customers arrive to the queue according to a Poisson process with rate $\lambda$, and each busy server provides service at rate $\mu$. Upon completing service, a server either immediately becomes available for a new task with probability $(1-p)$ or enters a charging phase with probability $p$. Each charging server completes its charging at rate $\gamma$, which may depend on the number of servers currently charging (for instance, due to shared charging infrastructure or power limitations).

\subsection{State Transitions}

From a given state $(Q,C)$, the possible transitions and their corresponding rates are:
\begin{enumerate}
    \item \textbf{Arrival:} if $Q+C < k$, a new customer occupies a free server,
    \[
    (Q,C) \to (Q+1,C), \quad \text{rate } \lambda.
    \]
    Otherwise, the arrival is blocked.

    \item \textbf{Service completion without charging:}
    \[
    (Q,C) \to (Q-1,C), \quad \text{rate } Q \mu (1-p).
    \]

    \item \textbf{Service completion with charging:}
    \[
    (Q,C) \to (Q-1,C+1), \quad \text{rate } Q \mu p.
    \]

    \item \textbf{Charging completion:}
    \[
    (Q,C) \to (Q,C-1), \quad \text{rate } \gamma C.
    \]
\end{enumerate}
The state space of the joint queueing and charging server processes is given by
\[
\mathcal{S} = \{ (Q,C) \mid 0 \le Q \le k, \; 0 \le C \le k, \; Q+C \le k \}.
\]

\subsection{Functional Forward Equations}

We start by deriving the functional version of the forward equations for the M/M/$k$/$k$ queue with charging servers. We then use it to derive the approximations develop later in the sequel. Let $f$ be any bounded, real-valued function on the state space of $\mathcal{S}$ . Then
\begin{eqnarray}
     \frac{d}{dt}\mathbb{E}[f(Q(t),C(t))] &=& \lambda  \mathbb{E}[ \left( f(Q(t)+1,C(t)) - f(Q(t),C(t)) \right) \cdot \textbf{1}_{Q(t) < k_{max} - C(t)}] \\
     &+& \mu (1-p) \mathbb{E}[ \left( f(Q(t)-1,C(t)) - f(Q(t),C(t)) \right) \cdot  Q(t)] \\
     &+& \mu p \mathbb{E}[ \left( f(Q(t)-1,C(t)+1) - f(Q(t),C(t)) \right) \cdot  Q(t)] \\
     &+& \gamma \mathbb{E}[ \left( f(Q(t),C(t)-1) - f(Q(t),C(t)) \right) \cdot  C(t)].
\end{eqnarray}
Moreover, the mean and covariance of (Q(t),C(t)) satisfy the following differential equations
\begin{eqnarray*}
      \frac{d}{dt}\mathbb{E}[Q(t)] &=& \lambda  \mathbb{E}[ \textbf{1}_{Q(t) < k - C(t)}] - \mu  \mathbb{E}[Q(t)]  \\
      \frac{d}{dt}\mathbb{E}[C(t)] &=&   \mu p \mathbb{E}[Q(t)]  - \gamma \mathbb{E}[ C(t) ] \\
      \frac{d}{dt}\mathrm{Var}[C(t)] &=&  \gamma \mathbb{E}[C(t)] - 2 \gamma \mathrm{Var}[C(t)] + 2 \mu p \mathrm{Cov}[Q(t),C(t)] + \mu p \mathbb{E}[Q(t) ]
      \\
    \frac{d}{dt}\mathrm{Var}[Q(t)] &=& \lambda  \mathbb{E}[ \textbf{1}_{Q(t) < k - C(t)}] +  \mu  \mathbb{E}[Q(t)] + 2 \lambda \mathrm{Cov}\left[Q(t), \textbf{1}_{Q(t) < k - C(t)} \right] - 2 \mu \mathrm{Var}[Q(t)] \\
      \frac{d}{dt}\mathrm{Cov}[Q(t),C(t)] &=& \lambda \mathrm{Cov}[C(t),\textbf{1}_{Q(t) < k - C(t)} ] - (\mu + \gamma) \mathrm{Cov}[Q(t),C(t)]  - \mu p \mathrm{Var}[Q(t)] + \mu p \mathbb{E}[Q(t)] .
\end{eqnarray*}

\subsection{Infinite Server Observation}

We begin by observing that, in the absence of the indicator function restricting arrivals, the joint process $(Q(t), C(t))$ evolves as a two-dimensional infinite-server Jackson network. Classical results by \citet{massey1993networks} show that such networks admit a product-form distribution in both transient and steady-state regimes. In addition, \citet{kelly1979reversibility, kelly1991loss} shows the Markovian infinite-server Jackson network is reversible, which allows us to invoke a truncation principle for constrained networks. This observation forms the foundation for the result stated next.

Having established that $(Q(t), C(t))$ constitutes a pair of infinite-server nodes in a Jackson network, we now characterize its steady-state behavior. The joint stationary distribution is given by a product of independent Poisson laws corresponding to each node. Imposing the linear constraint on the state space, we apply the truncation theorem for reversible Jackson networks by \citet{kelly1991loss} to obtain the steady-state distribution as a truncated version of this bivariate Poisson law. The precise form of this distribution is presented in the following theorem.

\begin{theorem}[Truncated Poisson for $Q+C\le k$]
Let $Q \sim \mathrm{Poisson}\left(\frac{\lambda}{\mu} \right)$, $C \sim \mathrm{Poisson}\left( \frac{\lambda p}{\gamma}\right)$, independent.  
Define the truncated state space
\[
\mathcal{S}_k := \{ (Q,C) \in \mathbb{Z}_{\ge 0}^2 : Q + C \le k \}.
\]

The truncated joint distribution is
\begin{align}
\mathbb{P}_{\rm trunc}(Q=x,C=y) 
= \frac{ \frac{\rho_Q^x}{x!} \frac{\rho_C^y}{y!} }{ Z_k}, \quad (x,y) \in \mathcal{S}_k,
\end{align}

where the normalization constant $Z_k$ is given by
\[
Z_k := \sum_{i=0}^{k}\sum_{j=0}^{k-i} \frac{\left(\frac{\lambda}{\mu} \right)^i}{i!} \frac{\left( \frac{\lambda p}{\gamma}\right)^j}{j!} = e^{\frac{\lambda}{\mu} + \frac{\lambda p}{\gamma}} \frac{\Gamma \left( k+1, \frac{\lambda}{\mu} + \frac{\lambda p}{\gamma} \right)}{\Gamma(k+1)}
\]
and where the incomplete gamma function is given by
\[ \Gamma \left( k, x \right)  = \int_{x}^{\infty} t^{k-1} e^{-t}  dt.\]
\begin{proof}
First rewrite the double sum using $n=i+j$:
\begin{align}
Z_k 
&= \sum_{n=0}^{k} \sum_{i=0}^{n} \frac{\rho_1^i}{i!} \frac{\rho_2^{\,n-i}}{(n-i)!}.
\end{align}

Use the identity
\[
\frac{1}{i!(n-i)!} = \frac{1}{n!} \binom{n}{i},
\]
and represent the binomial coefficient using the Beta identity
\[
\binom{n}{i} 
= \frac{1}{B(i+1,n-i+1)} 
\int_0^1 x^i (1-x)^{n-i} \, dx,
\]
where $B(a,b) = \int_0^1 x^{a-1}(1-x)^{b-1} dx$ is the Beta function.

Thus,
\begin{align}
\frac{\rho_1^i}{i!} \frac{\rho_2^{\,n-i}}{(n-i)!}
&= \frac{1}{n!} \binom{n}{i} \rho_1^i \rho_2^{\,n-i} \\
&= \frac{1}{n!} \int_0^1 \binom{n}{i} x^i (1-x)^{n-i} \rho_1^i \rho_2^{\,n-i} \, dx.
\end{align}

Summing over $i=0,\dots,n$ and interchanging sum and integral,
\begin{align}
\sum_{i=0}^{n} \frac{\rho_1^i}{i!} \frac{\rho_2^{\,n-i}}{(n-i)!}
&= \frac{1}{n!} \int_0^1 \sum_{i=0}^{n} \binom{n}{i} (\rho_1 x)^i (\rho_2 (1-x))^{n-i} \, dx.
\end{align}

The inner sum is a binomial expansion:
\[
\sum_{i=0}^{n} \binom{n}{i} (\rho_1 x)^i (\rho_2 (1-x))^{n-i}
= \big( \rho_1 x + \rho_2 (1-x) \big)^n.
\]

Hence,
\begin{align}
\sum_{i=0}^{n} \frac{\rho_1^i}{i!} \frac{\rho_2^{\,n-i}}{(n-i)!}
&= \frac{1}{n!} \int_0^1 \big( \rho_1 x + \rho_2 (1-x) \big)^n \, dx.
\end{align}

Now observe that
\[
\rho_1 x + \rho_2 (1-x) = \rho_2 + (\rho_1 - \rho_2)x,
\]
so the integral evaluates to
\[
\int_0^1 (\rho_1 x + \rho_2 (1-x))^n dx = \frac{(\rho_1+\rho_2)^n}{n+1}.
\]

Thus,
\begin{align}
\sum_{i=0}^{n} \frac{\rho_1^i}{i!} \frac{\rho_2^{\,n-i}}{(n-i)!}
= \frac{(\rho_1+\rho_2)^n}{n!}.
\end{align}

Therefore,
\begin{align}
Z_k = \sum_{n=0}^{k} \frac{(\rho_1+\rho_2)^n}{n!}.
\end{align}

Finally, using the incomplete gamma function identity for the truncated exponential sum,
\[
\sum_{n=0}^{k} \frac{x^n}{n!} = e^{x} \frac{\Gamma(k+1,x)}{\Gamma(k+1)},
\]
we obtain
\[
Z_k = e^{\rho_1+\rho_2} \frac{\Gamma(k+1,\rho_1+\rho_2)}{\Gamma(k+1)}.
\]

Substituting $\rho_1 = \frac{\lambda}{\mu}$ and $\rho_2 = \frac{\lambda p}{\gamma}$ gives the result.
\end{proof} 
\end{theorem}
With the steady-state distribution characterized, we can use it to quantify the blocking probability of our queueing system. In particular, blocking occurs precisely when the system reaches the boundary of its feasible region, that is, when the total number of customers in queue and servers in the charging state is equal to the system capacity. Thus, the blocking probability can be expressed as the sum of the mass over all states satisfying $Q + C = k$. This is obtained by summing the steady-state probabilities over this boundary set. The resulting expression provides an explicit characterization of the likelihood that an arriving customer is denied entry, and is stated formally in the result below.

\begin{theorem}[The Blocking Probability]
Let
\[
\rho_Q = \frac{\lambda}{\mu}, \qquad 
\rho_C = \frac{\lambda p}{\gamma}, \qquad 
\rho = \rho_Q + \rho_C,
\]
and define
\[
Z_k = \sum_{n=0}^k \frac{\rho^n}{n!}.
\]
Then, the blocking probability is given by
\[
\mathbb{P}(Q+C = k) = \frac{\rho^k/k!}{Z_k}.
\]
\begin{proof}
We begin by computing the marginal distribution of $N = Q + C$:
\begin{align}
\mathbb{P}(N=n) 
&= \sum_{q=0}^n \mathbb{P}(Q=q, C=n-q) \\
&= \frac{1}{Z_k} \sum_{q=0}^n \frac{\rho_Q^q}{q!} \frac{\rho_C^{n-q}}{(n-q)!}.
\end{align}

Using the binomial identity
\[
\sum_{q=0}^n \frac{\rho_Q^q}{q!} \frac{\rho_C^{n-q}}{(n-q)!}
= \frac{(\rho_Q + \rho_C)^n}{n!}
= \frac{\rho^n}{n!},
\]
we obtain
\[
\mathbb{P}(N=n) = \frac{\rho^n/n!}{Z_k}, \quad n=0,\dots,k.
\]
Thus, $N$ is a Poisson$(\rho)$ random variable conditioned on $N \le k$.  Finally, we set the total queue length and number of charging servers equal to $k$ and therefore we obtain 
\[
\mathbb{P}(Q+C = k)
= \mathbb{P}(N=k)
= \frac{\rho^k/k!}{Z_k}.
\]
\end{proof}
\end{theorem}


It is important to note that the blocking probability in this model admits a representation analogous to the classical Erlang loss formula. However, a subtle but crucial distinction arises: the effective offered load is no longer simply $\frac{\lambda}{\mu}$, as in the standard M/M/$k$/$k$ system. Instead, it takes the form $\frac{\lambda}{\mu} + \frac{\lambda p}{\gamma}$, where the first term corresponds to the usual traffic intensity from jobs in service, and the second term accounts for the additional load induced by servers that are temporarily unavailable due to charging or other operational constraints. As in the classical setting, the total number of servers is denoted by $k$.

A useful way to interpret this result is to view the combined system of queued jobs and charging servers as a single aggregate quantity. Specifically, consider the sum of the number of customers in the system and the number of servers that are currently charging. This total can be naturally approximated by a Poisson random variable with mean equal to the effective offered load described above. Truncating this Poisson distribution at the maximum allowable level, namely the total number of servers $k$, yields a blocking probability formula that mirrors the Erlang loss formula, but applied to the combined quantity of customers and charging servers rather than to the queue length alone.

This perspective provides a useful conceptual insight. Despite the additional complexity introduced by charging dynamics, the system retains a structure closely analogous to the classical Erlang loss model. The key idea is that by aggregating queue length and the number of unavailable servers into a single Poisson variable and then applying the standard truncation argument, one obtains a closed form expression for the blocking probability. This viewpoint not only streamlines the analysis but also offers a clear probabilistic interpretation: in aggregate, the system behaves like an M/M/$k$/$k$ queue with an adjusted offered load that accounts for both active and temporarily unavailable servers.

\begin{theorem}[Joint factorial moments of the truncated Poisson product distribution]
Let $Q$ and $C$ be nonnegative integer-valued random variables with joint distribution
\[
\mathbb{P}(Q=q, C=c)
=
\frac{1}{Z_k}
\frac{\rho_Q^q}{q!}\frac{\rho_C^c}{c!},
\qquad q,c \ge 0,\; q+c \le k,
\]
where the normalizing constant is
\[
Z_k
=
\sum_{n=0}^k \frac{(\rho_Q + \rho_C)^n}{n!}.
\]

Then for all integers $r,s \ge 0$, the joint factorial moments satisfy
\[
\mathbb{E}\big[(Q)_r (C)_s\big]
=
\begin{cases}
\displaystyle
\frac{\rho_Q^r \rho_C^s}{Z_k}
\sum_{m=0}^{k-(r+s)} \frac{(\rho_Q+\rho_C)^m}{m!},
& \text{if } r+s \le k, \\[1.5em]
0, & \text{if } r+s > k,
\end{cases}
\]
where $(x)_r = x(x-1)\cdots(x-r+1)$ denotes the falling factorial.

\end{theorem}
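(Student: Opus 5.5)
The plan is a direct computation from the truncated product form in the first theorem of this section, using the falling factorial to cancel the factorials in the weights. I would write
\[
\mathbb{E}\big[(Q)_r (C)_s\big] = \frac{1}{Z_k}\sum_{\substack{q,c\ge 0\\ q+c\le k}} (q)_r (c)_s \frac{\rho_Q^q}{q!}\frac{\rho_C^c}{c!}.
\]
Since $(q)_r = 0$ for $q<r$ and $(c)_s=0$ for $c<s$, only terms with $q\ge r$ and $c\ge s$ survive. For those terms $(q)_r/q! = 1/(q-r)!$ and $(c)_s/c! = 1/(c-s)!$.

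Next I would reindex by $a=q-r\ge 0$ and $b=c-s\ge 0$. The constraint $q+c\le k$ becomes $a+b\le k-(r+s)$. If $r+s>k$, the index set is empty, which gives the second case of the statement. If $r+s\le k$, factoring out $\rho_Q^r\rho_C^s$ leaves
\[
\frac{\rho_Q^r\rho_C^s}{Z_k}\sum_{\substack{a,b\ge 0\\ a+b\le k-(r+s)}} \frac{\rho_Q^a}{a!}\frac{\rho_C^b}{b!}.
\]

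The remaining double sum is exactly the normalizing constant $Z_{k-(r+s)}$ with $k$ replaced by $k-(r+s)$. The computation already carried out in the proofs of the first two theorems of this section (grouping by $m=a+b$ and applying the binomial identity $\sum_{a=0}^m \rho_Q^a\rho_C^{m-a}/(a!(m-a)!) = (\rho_Q+\rho_C)^m/m!$) shows it equals $\sum_{m=0}^{k-(r+s)}(\rho_Q+\rho_C)^m/m!$. Substituting this gives the claimed formula.

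There is no real obstacle here. The only point needing care is the bookkeeping of the index set after the shift: one must check that the triangular region $\{q+c\le k\}$ maps onto the smaller triangle $\{a+b\le k-r-s\}$, and that the vanishing of the falling factorials justifies discarding the boundary strips $q<r$ and $c<s$. As a sanity check, taking $r=1$, $s=0$ should give $\mathbb{E}[Q]=\rho_Q Z_{k-1}/Z_k$, which matches the Little's-law-type identity $\mathbb{E}[Q]=\rho_Q(1-\mathbb{P}(Q+C=k))$ from the blocking probability theorem.
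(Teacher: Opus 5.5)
Your proof is correct, but it takes a different route from the paper.

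The paper conditions on $N=Q+C$. It observes that $N$ is a Poisson$(\rho_Q+\rho_C)$ variable truncated at $k$. Given $N=n$, the split $(Q,C)$ is multinomial with cell probabilities $p_Q=\rho_Q/(\rho_Q+\rho_C)$ and $p_C=\rho_C/(\rho_Q+\rho_C)$. The paper then imports the multinomial factorial-moment formula $\mathbb{E}[(Q)_r(C)_s\mid N=n]=(n)_{r+s}p_Q^rp_C^s$, which gives $\mathbb{E}[(Q)_r(C)_s]=p_Q^rp_C^s\,\mathbb{E}[(N)_{r+s}]$. Finally it does your index shift in one dimension only, obtaining $\mathbb{E}[(N)_m]=(\rho_Q+\rho_C)^m Z_{k-m}/Z_k$.

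You instead shift $a=q-r$, $b=c-s$ directly on the two-dimensional triangle and recognize the leftover double sum as $Z_{k-(r+s)}$. This buys you several things:
\begin{itemize}
\item it is shorter and self-contained;
\item it needs no multinomial moment formula, which is itself proved by exactly this kind of shift;
\item it never divides by $\rho_Q+\rho_C$;
\item the case $r+s>k$ falls out as an empty index set.
\end{itemize}
The paper's version buys structure instead. The \emph{truncated Poisson total plus binomial thinning} decomposition explains why every joint factorial moment reduces to a one-dimensional factorial moment of $N$. It is also the same decomposition the paper reuses for the blocking probability and the PGF.

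One small caution on citation. The identity $\sum_{a=0}^m \rho_Q^a\rho_C^{m-a}/(a!(m-a)!)=(\rho_Q+\rho_C)^m/m!$ is just the binomial theorem, so cite that directly. Do not point to the Beta-integral computation in the proof of the first theorem: its intermediate steps do not hold as written. For example, $\int_0^1(\rho_1x+\rho_2(1-x))^n\,dx$ is not $(\rho_1+\rho_2)^n/(n+1)$ for $n\ge 2$ when $\rho_1\rho_2\neq 0$.
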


\begin{proof}
Let $N = Q + C$. Then for any $q,c \ge 0$ with $q+c=n$, we have
\[
\frac{\rho_Q^q}{q!}\frac{\rho_C^c}{c!}
=
\frac{(\rho_Q+\rho_C)^n}{n!}
\binom{n}{q}
\left(\frac{\rho_Q}{\rho_Q+\rho_C}\right)^q
\left(\frac{\rho_C}{\rho_Q+\rho_C}\right)^c.
\]
It follows that
\[
\mathbb{P}(N=n)
=
\frac{1}{Z_k}\frac{(\rho_Q+\rho_C)^n}{n!}, \qquad 0 \le n \le k,
\]
and conditionally on $N=n$,
\[
(Q,C)\mid N=n \sim \mathrm{Multinomial}\big(n; p_Q, p_C\big),
\]
where $p_Q = \frac{\rho_Q}{\rho_Q+\rho_C}$ and $p_C = \frac{\rho_C}{\rho_Q+\rho_C}$.

Thus,
\[
\mathbb{E}[(Q)_r (C)_s]
=
\mathbb{E}\big[ \mathbb{E}[(Q)_r (C)_s \mid N] \big].
\]

For a multinomial random vector,
\[
\mathbb{E}[(Q)_r (C)_s \mid N=n]
=
(n)_{r+s} p_Q^r p_C^s.
\]
Therefore,
\[
\mathbb{E}[(Q)_r (C)_s]
=
p_Q^r p_C^s \, \mathbb{E}[(N)_{r+s}].
\]

Now compute
\[
\mathbb{E}[(N)_m]
=
\frac{1}{Z_k}
\sum_{n=m}^k (n)_m \frac{(\rho_Q+\rho_C)^n}{n!}.
\]
Using $(n)_m = \frac{n!}{(n-m)!}$, we obtain
\[
\mathbb{E}[(N)_m]
=
\frac{1}{Z_k}
\sum_{n=m}^k \frac{(\rho_Q+\rho_C)^n}{(n-m)!}.
\]
Letting $j = n-m$, this becomes
\[
\mathbb{E}[(N)_m]
=
\frac{(\rho_Q+\rho_C)^m}{Z_k}
\sum_{j=0}^{k-m} \frac{(\rho_Q+\rho_C)^j}{j!}.
\]

Combining terms and using $p_Q = \frac{\rho_Q}{\rho_Q+\rho_C}$,
$p_C = \frac{\rho_C}{\rho_Q+\rho_C}$, we obtain
\[
\mathbb{E}[(Q)_r (C)_s]
=
\frac{\rho_Q^r \rho_C^s}{Z_k}
\sum_{j=0}^{k-(r+s)} \frac{(\rho_Q+\rho_C)^j}{j!},
\]
whenever $r+s \le k$.  If $r+s > k$, then $(Q)_r (C)_s = 0$ almost surely since $Q+C \le k$, and the result follows.
\end{proof}

\begin{corollary}
Let $\alpha = \frac{\rho_Q}{\rho}$, then the mean and covariance of $Q$ and $C$ are given by
\begin{align}
\mathbb{E}[Q] &= \rho_Q \frac{Z_{k-1}}{Z_k}, 
\\
\mathbb{E}[C] &= \rho_C \frac{Z_{k-1}}{Z_k},\\
\mathrm{Cov}[Q,C]
&= \alpha(1-\alpha) \left( \rho^2 \frac{Z_{k-2}}{Z_k}
- \rho^2 \left(\frac{Z_{k-1}}{Z_k}\right)^2\right).
\end{align}
\end{corollary}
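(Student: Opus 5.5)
The plan is to read all three identities off the joint factorial moment formula of the preceding theorem, choosing $(r,s)$ suitably. The one observation needed is that the inner sum in that theorem is itself a normalizing constant:
\[
\sum_{m=0}^{k-(r+s)} \frac{\rho^m}{m!} = Z_{k-(r+s)}, \qquad \rho=\rho_Q+\rho_C .
\]
With this, the theorem reads $\mathbb{E}[(Q)_r(C)_s] = \rho_Q^r\rho_C^s\, Z_{k-(r+s)}/Z_k$. If we adopt the convention $Z_n=0$ for $n<0$, this form also covers the case $r+s>k$, where the theorem gives $0$.

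For the means, take $(r,s)=(1,0)$. Since $(Q)_1=Q$, this gives $\mathbb{E}[Q]=\rho_Q Z_{k-1}/Z_k$. Symmetrically, $(r,s)=(0,1)$ gives $\mathbb{E}[C]=\rho_C Z_{k-1}/Z_k$. Both use only $r+s=1\le k$, which holds for every $k\ge 1$.

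For the covariance, take $(r,s)=(1,1)$. Since $(Q)_1(C)_1=QC$, this gives $\mathbb{E}[QC]=\rho_Q\rho_C Z_{k-2}/Z_k$. Subtracting the product of the two means yields
\[
\mathrm{Cov}[Q,C]=\rho_Q\rho_C\left(\frac{Z_{k-2}}{Z_k}-\Big(\frac{Z_{k-1}}{Z_k}\Big)^2\right).
\]
With $\alpha=\rho_Q/\rho$ we have $1-\alpha=\rho_C/\rho$, hence $\rho_Q\rho_C=\alpha(1-\alpha)\rho^2$. Substituting this gives exactly the stated expression.

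There is no real obstacle; the only point needing care is the degenerate case $k=1$ for the covariance. There $r+s=2>k$, so the theorem gives $\mathbb{E}[QC]=0$. This agrees with the formula under the convention $Z_{-1}=0$, and I would state that convention explicitly. Consistent with this convention, the resulting covariance is negative whenever $k\ge 1$, as expected from the constraint $Q+C\le k$.
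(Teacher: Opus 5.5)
Your proposal is correct and follows the route the paper intends: the corollary is stated without a separate proof, as a direct specialization of the joint factorial moment theorem to $(r,s)=(1,0),(0,1),(1,1)$, combined with $\rho_Q\rho_C=\alpha(1-\alpha)\rho^2$. Your explicit convention $Z_{-1}=0$ for the case $k=1$ is a useful clarification the paper leaves implicit. One small caveat: your closing remark that the covariance is strictly negative is not part of the statement and needs $\rho_Q\rho_C>0$, i.e.\ $p>0$.
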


\begin{theorem}[PGF via Incomplete Gamma Function]
Let $(Q,C)$ have the truncated bivariate Poisson distribution with parameters
\[
\rho_Q = \frac{\lambda}{\mu}, \quad
\rho_C = \frac{\lambda p}{\gamma}, \quad
\rho = \rho_Q + \rho_C, \quad
Z_k = \sum_{n=0}^k \frac{\rho^n}{n!}.
\]
and truncation at $k$. Then the bivariate PGF is given by
\[
G(s,t) = \mathbb{E}[ s^Q t^C ] =  \frac{\Gamma(k+1, \rho - (\rho_Q s + \rho_C t))}{\Gamma(k+1, \rho)}.
\]
\begin{proof} 
We have the following expression
\begin{align}
G(s,t) &= \frac{1}{Z_k} \sum_{n=0}^{k} \frac{\rho^n}{n!} \sum_{q=0}^{n} \binom{n}{q} \alpha^q (1-\alpha)^{n-q} s^q t^{\,n-q} \nonumber \\
&= \frac{1}{Z_k} \sum_{n=0}^{k} \frac{\rho^n}{n!} (\alpha s + (1-\alpha) t)^n \nonumber \\
&= \frac{1}{Z_k} \sum_{n=0}^{k} \frac{\big(\rho_Q s + \rho_C t \big)^n}{n!} \nonumber\\
&= \frac{1}{Z_k} \sum_{n=0}^{k} \frac{(\rho_Q s + \rho_C t)^n}{n!} \nonumber\\
&= \frac{\Gamma(k+1)}{\Gamma(k+1,\rho)} e^{-(\rho - \rho_Q s - \rho_C t)} 
\sum_{n=0}^{k} \frac{(\rho_Q s + \rho_C t)^n}{n!} e^{-(\rho_Q s + \rho_C t)} \nonumber\\
&= \frac{\Gamma(k+1)}{\Gamma(k+1,\rho)} \, e^{-(\rho - \rho_Q s - \rho_C t)} \, \frac{\Gamma(k+1, \rho - (\rho_Q s + \rho_C t))}{\Gamma(k+1)} \nonumber\\
&= \frac{\Gamma(k+1, \rho - (\rho_Q s + \rho_C t))}{\Gamma(k+1, \rho)}.
\end{align}
\end{proof}
\end{theorem}

\section{Insensitivity of the $M/G/k/k$ Loss System} \label{Sec_3}

In this section, we consider an $M/G/k/k$ queue with arrival rate $\lambda$, i.i.d.\ service times with distribution $F$ satisfying $\mathbb{E}[S] < \infty$ and i.i.d. charging times with distribution $G$ and finite mean. We also let
\[
B(F,G) := \mathbb{P}(\text{all $k$ servers are busy or charging in steady state})
\]
denote the blocking probability. It is the goal of this section, to show that the blocking probability is insenstive to the distributions $F$ and $G$ and is only a function of their means.



\begin{lemma}[Continuity of blocking probability]
Let $F_n$ be a sequence of service distributions such that
\[
F_n \Rightarrow F, \quad \mathbb{E}[S_n] \to \mathbb{E}[S].
\]
Then
\[
B(F_n) \to B(F).
\]
\begin{proof}
Let $Y^{(n)}$ and $Y$ denote the steady-state number in system for the $M/G/\infty$ systems with service distributions $F_n$ and $F$, respectively. By the standard results on Poisson driven infinite server queues \citet{eick1993physics, massey1993networks}, we have
\[
Y^{(n)} \sim \mathrm{Poisson}(\lambda \mathbb{E}[S_n]), \quad
Y \sim \mathrm{Poisson}(\lambda \mathbb{E}[S]).
\]
Since $\mathbb{E}[S_n] \to \mathbb{E}[S]$, the Poisson parameters converge, and hence
\[
Y^{(n)} \Rightarrow Y.
\]
Therefore,
\[
\mathbb{P}(Y^{(n)} \ge k) \to \mathbb{P}(Y \ge k).
\]
Using the representation $B(F_n) = \mathbb{P}(Y^{(n)} \ge k)$ and $B(F) = \mathbb{P}(Y \ge k)$ completes the proof.
\end{proof}
\end{lemma}

\begin{theorem}[Insensitivity for $M/G/k/k$ with Charging]
Consider an $M/G/k/k$ loss system with Poisson arrivals of rate $\lambda$. Each admitted customer undergoes a service time $S \sim F$ followed immediately by a charging time $ R \sim G$, where $F$ and $G$ are arbitrary nonnegative distributions with finite means $\mathbb{E}[S]$ and $\mathbb{E}[R]$.  Then the steady-state blocking probability is
\begin{align}
B &= \frac{\frac{\rho^k}{k!}}{\displaystyle \sum_{j=0}^{k} \frac{\rho^j}{j!}},
\end{align}
where
\begin{align}
\rho = \lambda \big(\mathbb{E}[S] + \mathbb{E}[R]\big),
\end{align}
and hence depends only on the means of $S$ and $R$, not on their full distributions.

\begin{proof}
Let $S$ and $R$ be nonnegative random variables with distributions $F$ and $G$ and finite means. Let $B$ denote the steady-state blocking probability in the $M/G/k/k$ loss system with i.i.d.\ service times $S$ and charging times $R$, and arrival rate $\lambda$.  We first approximate $S$ and $R$ by phase-type (PH) distributions. It is well known that PH distributions are dense in the set of probability measures on $[0,\infty)$ with respect to weak convergence, and moreover can be chosen to approximate first moments. Hence, there exist sequences $\{S^{(n)}\}$ and $\{R^{(n)}\}$ such that
\[
S^{(n)} \Rightarrow S, \qquad R^{(n)} \Rightarrow R, \qquad 
\mathbb{E}[S^{(n)}] \to \mathbb{E}[S], \qquad \mathbb{E}[R^{(n)}] \to \mathbb{E}[R],
\]
where each $S^{(n)}$ and $R^{(n)}$ is phase-type.

For each $n$, consider the $M/PH/k/k$ system with service times $S^{(n)}$ and charging times $R^{(n)}$. Since phase-type distributions are closed under convolution, the total time $T^{(n)} := S^{(n)} + R^{(n)}$ is also phase-type. Thus, the system is an $M/PH/k/k$ loss system with i.i.d.\ holding times $T^{(n)}$.  It is a classical result for $M/PH/k/k$ systems that the stationary distribution is insensitive to the specific phase-type representation and depends only on the mean holding time. In particular, the blocking probability is given by the Erlang loss formula
\[
B^{(n)} = E_k(\rho_n), \qquad 
\rho_n := \lambda \, \mathbb{E}[T^{(n)}] = \lambda \big(\mathbb{E}[S^{(n)}] + \mathbb{E}[R^{(n)}]\big),
\]
where
\[
E_k(\rho) := \frac{\frac{\rho^k}{k!}}{\displaystyle \sum_{j=0}^{k} \frac{\rho^j}{j!}}.
\]

By convergence of expectations, we have
\[
\rho_n \to \rho := \lambda \big(\mathbb{E}[S] + \mathbb{E}[R]\big).
\]
Since $E_k(\cdot)$ is continuous on $[0,\infty)$, it follows that
\[
B^{(n)} = E_k(\rho_n) \to E_k(\rho).
\]

It remains to show that $B^{(n)} \to B$. To this end, note that the $M/G/k/k$ system can be described as a Markov process on the finite state space $\{0,1,\dots,k\}$ augmented with residual service times. The corresponding sequence of processes associated with $T^{(n)}$ converges weakly to that associated with $T = S+R$ under the assumed weak convergence and convergence of first moments. 

Moreover, since the state space of the occupancy process is finite, the stationary distribution is uniquely determined and depends continuously on the underlying holding-time distribution through the associated Markov renewal kernel. This continuity follows from standard perturbation results for regenerative processes with finite state space and uniformly integrable cycle lengths, see for example \citet{asmussen2003applied, resnick2013adventures}. Consequently, the stationary blocking probabilities satisfy
\[
B^{(n)} \to B.
\]

Combining the above limits yields
\[
B = \lim_{n\to\infty} B^{(n)} = \lim_{n\to\infty} E_k(\rho_n) = E_k(\rho),
\]
that is,
\[
B = \frac{\frac{\rho^k}{k!}}{\displaystyle \sum_{j=0}^{k} \frac{\rho^j}{j!}},
\qquad 
\rho = \lambda \big(\mathbb{E}[S] + \mathbb{E}[R]\big).
\]

This establishes that the blocking probability is insensitive to the full distributions $F$ and $G$, depending only on their means.
\end{proof}
\end{theorem}

\subsection{Numerical Experiments}

In this section, we present numerical experiments demonstrating that the blocking probability is insensitive to both the service time and charging time distributions. As shown in Figures \ref{Fig:bar_graph5} - \ref{Fig:bar_graph6}, the blocking probabilities remain essentially unchanged when the mean service time and mean charging time are fixed. We compare results across five distributions for both service and charging times (exponential, gamma, lognormal, uniform, and constant). It is particularly striking that this insensitivity persists even when the server dynamics is stochastic.

\begin{figure}[htbp]
\centering
\includegraphics[scale=0.45]{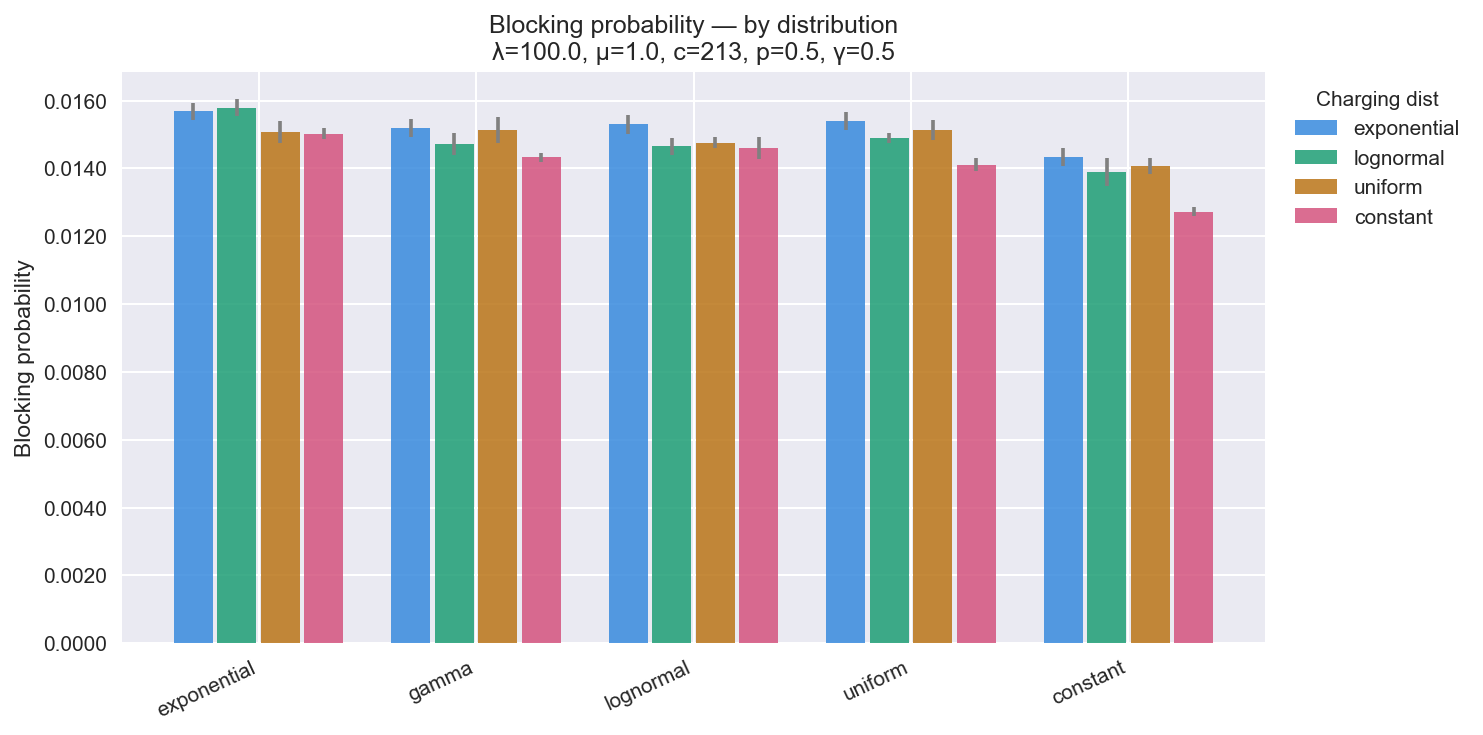}
\caption{A matrix graph of insensitivity.}  \label{Fig:bar_graph5}
\end{figure}



\begin{figure}[htbp]
\centering
\includegraphics[scale=0.45]{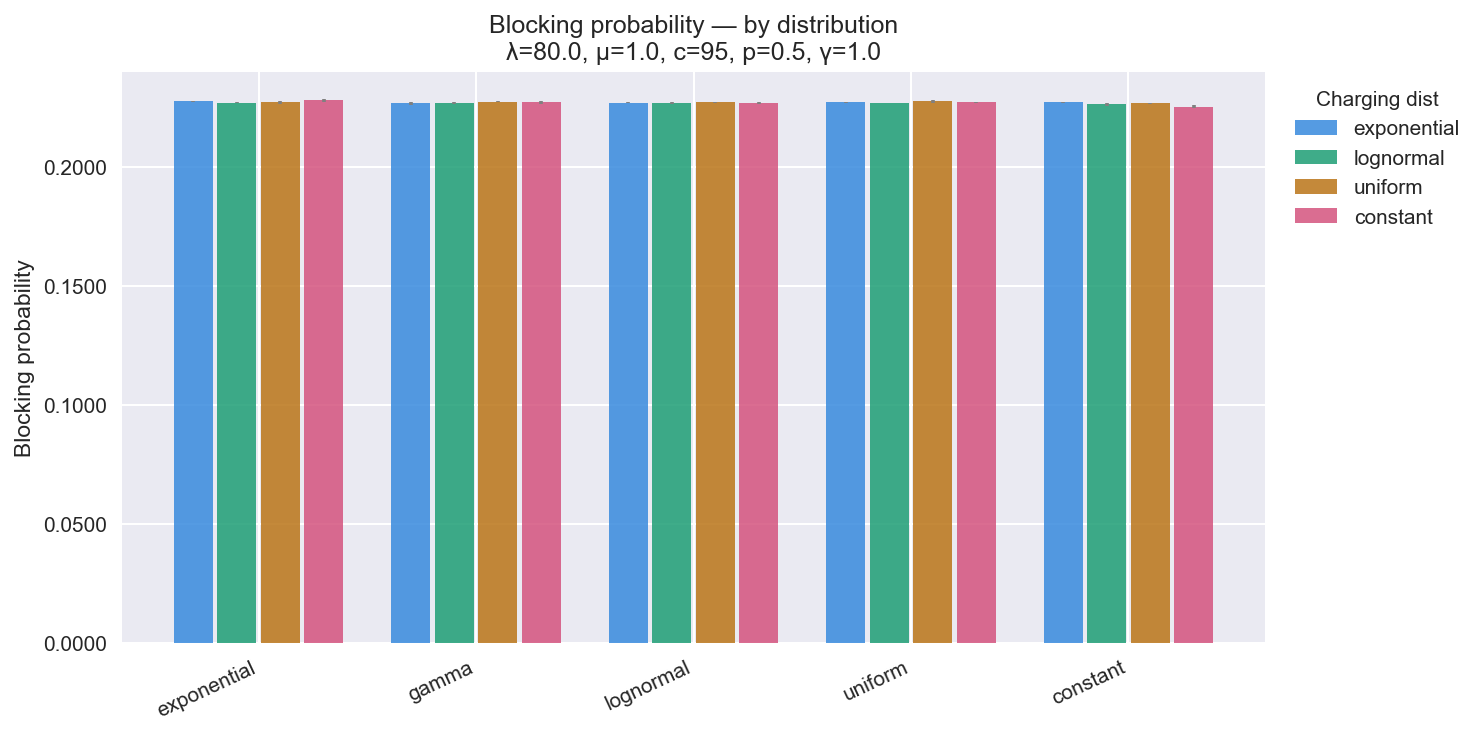}
\caption{A matrix graph of insensitivity.}  \label{Fig:bar_graph6}
\end{figure}

\section{Conclusion} \label{conclusion}
In this paper, we study the M/M/$k$/$k$ and M/G/$k$/$k$ queues with stochastic servers that intermittently leave for charging and subsequently return. We first derive an explicit expression for the steady state blocking probability and show that it can be written in a form analogous to the classical Erlang blocking formula with appropriately modified parameters. In addition, we establish that the blocking probability is insensitive to the service and charging time distributions. This is achieved by observing that the infinite server counterpart of the model with charging servers forms a Jackson network when the service and charging times are phase type. We then use the density of phase type distributions together with the continuity of the Erlang blocking formula to extend this insensitivity result to general distributions.

It is of interest to investigate whether the Hayward approximation for the G/G/$k$/$k$ queue provides an accurate approximation in the presence of general arrival processes like Hawkes processes in \citet{daw2018queues, koops2018infinite, daw2025co}. Another natural direction is to develop approximations for the blocking probability under nonstationary arrivals like in \citet{grier1997time, pender2015nonstationary, pender2020stochasticscooter}. It is conjectured that a similar approach to that used in this paper, based on aggregating the infinite server system and applying truncation, may prove effective in this setting.

Finally, an important extension is to consider networks of loss queues. Such a framework would enable the analysis of systems such as drone networks and other applications involving interconnected service nodes. It would also be relevant for modeling autonomous ride sharing systems in urban mobility networks, where vehicles alternate between serving customers and repositioning or recharging. Another promising application area is cloud computing systems with energy constrained servers, where machines cycle between processing tasks and entering low power states.

\section*{Acknowledgements}
Jamol Pender would like to acknowledge the gracious support of the American Mathematical Society via the Claytor-Gilmer Fellowship.  


\bibliographystyle{plainnat}
\bibliography{refs}

@article{fuentes2026queues,
  title={Queues with Rechargeable Servers},
  author={Fuentes-Quezada, Eliezer and Pender, Jamol},
  journal={arXiv preprint arXiv:2602.10628},
  year={2026}
}

@article{eick1993physics,
  title={{The physics of the $M_t$/G/$\infty$ queue}},
  author={Eick, Stephen G and Massey, William A and Whitt, Ward},
  journal={Operations Research},
  volume={41},
  number={4},
  pages={731--742},
  year={1993},
  publisher={INFORMS}
}

@inproceedings{khamidehi2022dynamic,
  title={Dynamic resource management for providing QOS in drone delivery systems},
  author={Khamidehi, Behzad and Raeis, Majid and Sousa, Elvino S},
  booktitle={2022 IEEE 25th International Conference on Intelligent Transportation Systems (ITSC)},
  pages={3529--3536},
  year={2022},
  organization={IEEE}
}

@article{grippa2019drone,
  title={Drone delivery systems: Job assignment and dimensioning},
  author={Grippa, Pasquale and Behrens, Doris A and Wall, Friederike and Bettstetter, Christian},
  journal={Autonomous Robots},
  volume={43},
  number={2},
  pages={261--274},
  year={2019},
  publisher={Springer}
}

@inproceedings{munishkin2023traffic,
  title={Traffic Flow Analysis for Package Delivery Drones using a Queueing Model},
  author={Munishkin, Alexey A and Pradeep, Priyank and Chour, Kenny and Kalyanam, Krishna M},
  booktitle={2023 IEEE/AIAA 42nd Digital Avionics Systems Conference (DASC)},
  pages={1--9},
  year={2023},
  organization={IEEE}
}

@inproceedings{ghosh2023performance,
  title={Performance Evaluation of Drones in FANETs using Queueing Model},
  author={Ghosh, Akash and Dash, Bibhuti Bhusan and Patra, Sudhansu Shekhar and Pandey, Trilok Nath and Pattanayak, Binod Kumar and De, Utpal Chandra},
  booktitle={2023 International Conference on Sustainable Communication Networks and Application (ICSCNA)},
  pages={43--48},
  year={2023},
  organization={IEEE}
}

@article{huang2020drone,
  title={Drone routing in a time-dependent network: Toward low-cost and large-range parcel delivery},
  author={Huang, Hailong and Savkin, Andrey V and Huang, Chao},
  journal={IEEE Transactions on Industrial Informatics},
  volume={17},
  number={2},
  pages={1526--1534},
  year={2020},
  publisher={IEEE}
}

@article{shavarani2019congested,
  title={A congested capacitated multi-level fuzzy facility location problem: An efficient drone delivery system},
  author={Shavarani, Seyed Mahdi and Mosallaeipour, Sam and Golabi, Mahmoud and {\.I}zbirak, G{\"o}khan},
  journal={Computers \& Operations Research},
  volume={108},
  pages={57--68},
  year={2019},
  publisher={Elsevier}
}

@article{cokyasar2021designing,
  title={Designing a drone delivery network with automated battery swapping machines},
  author={Cokyasar, Taner and Dong, Wenquan and Jin, Mingzhou and Verbas, {\.I}smail {\"O}mer},
  journal={Computers \& Operations Research},
  volume={129},
  pages={105177},
  year={2021},
  publisher={Elsevier}
}

@article{pinto2020network,
  title={A network design model for a meal delivery service using drones},
  author={Pinto, Roberto and Zambetti, Michela and Lagorio, Alexandra and Pirola, Fabiana},
  journal={International Journal of Logistics Research and Applications},
  volume={23},
  number={4},
  pages={354--374},
  year={2020},
  publisher={Taylor \& Francis}
}

@inproceedings{seakhoa2019revenue,
  title={Revenue-driven scheduling in drone delivery networks with time-sensitive service level agreements},
  author={Seakhoa-King, Shireen and Balaji, Paul and Alvarez, Nicolas Trama and Knottenbelt, William J},
  booktitle={Proceedings of the 12th EAI international conference on performance evaluation methodologies and tools},
  pages={183--186},
  year={2019}
}

@article{pender2020stochasticscooter,
  title={A stochastic model for electric scooter systems},
  author={Pender, Jamol and Tao, Shuang and Wikum, Anders},
  journal={arXiv preprint arXiv:2004.10727},
  year={2020}
}

@article{daw2018queues,
  title={Queues driven by Hawkes processes},
  author={Daw, Andrew and Pender, Jamol},
  journal={Stochastic Systems},
  volume={8},
  number={3},
  pages={192--229},
  year={2018},
  publisher={INFORMS}
}

@article{daw2025co,
  title={The co-production of service: Modeling services in contact centers using Hawkes processes},
  author={Daw, Andrew and Castellanos, Antonio and Yom-Tov, Galit B and Pender, Jamol and Gruendlinger, Leor},
  journal={Management Science},
  volume={71},
  number={3},
  pages={2635--2656},
  year={2025},
  publisher={INFORMS}
}

@article{koops2018infinite,
  title={Infinite-server queues with Hawkes input},
  author={Koops, David T and Saxena, Mayank and Boxma, Onno J and Mandjes, Michel},
  journal={Journal of Applied Probability},
  volume={55},
  number={3},
  pages={920--943},
  year={2018},
  publisher={Cambridge University Press}
}

@article{pender2015nonstationary,
  title={Nonstationary loss queues via cumulant moment approximations},
  author={Pender, Jamol},
  journal={Probability in the Engineering and Informational Sciences},
  volume={29},
  number={1},
  pages={27--49},
  year={2015},
  publisher={Cambridge University Press}
}

@article{massey1993networks,
  title={Networks of infinite-server queues with nonstationary Poisson input},
  author={Massey, William A and Whitt, Ward},
  journal={Queueing Systems},
  volume={13},
  number={1},
  pages={183--250},
  year={1993},
  publisher={Springer}
}

@book{kelly1979reversibility,
  title={Reversibility and stochastic networks},
  author={Kelly, Frank P},
  year={1979},
  publisher={J. Wiley}
}

@book{asmussen2003applied,
  title={Applied probability and queues},
  author={Asmussen, S{\o}ren},
  year={2003},
  publisher={Springer}
}

@book{resnick2013adventures,
  title={Adventures in stochastic processes},
  author={Resnick, Sidney I},
  year={2013},
  publisher={Springer Science \& Business Media}
}

@article{grier1997time,
  title={The time-dependent Erlang loss model with retrials},
  author={Grier, Nathaniel and Massey, William A and McKoy, Tyrone and Whitt, Ward},
  journal={Telecommunication Systems},
  volume={7},
  number={1},
  pages={253--265},
  year={1997},
  publisher={Springer}
}

@article{kelly1991loss,
  title={Loss networks},
  author={Kelly, Frank P},
  journal={The annals of applied probability},
  pages={319--378},
  year={1991},
  publisher={JSTOR}
}

\end{document}